\documentclass[a4paper,12pt]{article}

\usepackage[intlimits]{amsmath}
\usepackage{amssymb}
\usepackage{amsthm}
\usepackage{mathrsfs}
\usepackage{cite}
\usepackage{color}
\usepackage{bm}

\newcommand{\pr}{\partial}
\newcommand{\dom}{\Omega}

\newcommand{\R}{\mathbb{R}}

\newcommand{\N}{\mathbb{N}}

\newcommand{\dd}{\,\text{d}}


\newtheorem{thm}{Theorem}
\newtheorem{prop}{Proposition}
\newtheorem{cor}{Corollary}

\title{Two uniqueness results in the inverse boundary value problem for the weighted p-Laplace equation}
\author{C\u{a}t\u{a}lin I. C\^{a}rstea\thanks{Department of Applied Mathematics, National Yang Ming Chiao Tung University, Hsinchu 300, Taiwan, R.O.C.; \mbox{email: catalin.carstea@gmail.com}} \and Ali Feizmohammadi\thanks{Department of Mathematics, University of Toronto, Mississauga, ON L5L 1C6, Canada; email: ali.feizmohammadi@utoronto.ca}}
\date{}

\begin{document}
\maketitle

\begin{abstract}
In this paper we prove a general uniqueness result in the inverse boundary value problem for the weighted p-Laplace equation in the plane, with smooth weights. We also prove a uniqueness result in dimension 3 and higher, for real analytic weights that are subject to a smallness condition on one of their directional derivatives. Both results are obtained by linearizing the equation at a solution without critical points. This unknown solution is then recovered, together with the unknown weight. 
\end{abstract}

\section{Introduction}

Let $\dom\subset\R^n$, $n\geq2$, be a compact connected set with nonempty interior and a smooth boundary, let $\gamma\in C^\infty(\bar \dom)$ be a positive function, and finally let $p\in(1,2)\cup(2,\infty)$. We consider the boundary value problem
\begin{equation}\label{eq}
\left\{\begin{array}{l}\nabla\cdot(\gamma|\nabla u|^{p-2}\nabla u)=0,\\[7pt] u|_{\pr\dom}=f,\end{array}\right.
\end{equation}
where $f$, $u$ are real valued functions. Equation \eqref{eq} is known as the \emph{weighted $p$-Laplace} equation and it is a quasilinear, degenerate elliptic equation. The forward problem for this equation is well studied and we have
\begin{thm}[e.g. \mbox{\cite[Theorem 1]{Lie}}]\label{thm-existence}
Let $f\in C^{1,\alpha}(\pr\dom)$ for some $\alpha \in (0,1]$. There exist $\beta\in (0,1)$ and $C(\|f\|_{C^{1,\alpha}(\pr\dom)})>0$ nondecreasing such that equation \eqref{eq} has a unique weak solution $u\in C^{1,\beta}(\bar\dom)$ and
\begin{equation}
\|u\|_{C^{1,\beta}(\bar\dom)}\leq C(\|f\|_{C^{1,\alpha}(\pr\dom)}).
\end{equation}
\end{thm}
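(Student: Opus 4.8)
The plan is to construct the solution by the direct method of the calculus of variations and then to upgrade its regularity by invoking the $C^{1,\beta}$ theory for quasilinear degenerate elliptic equations. To begin, I would recast \eqref{eq} as the Euler--Lagrange equation of the functional
\begin{equation*}
J(v)=\frac1p\int_\dom \gamma\,|\nabla v|^p\dd x,
\end{equation*}
minimized over the affine set $\mathcal A=\{v\in W^{1,p}(\dom):v-F\in W^{1,p}_0(\dom)\}$, where $F$ is a fixed $W^{1,p}$-extension of $f$. Since $\gamma$ is bounded above and below by positive constants, $J$ is coercive on $\mathcal A$, and because $t\mapsto|t|^p$ is strictly convex for $p>1$, the functional is strictly convex and weakly lower semicontinuous. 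The direct method then produces a unique minimizer $u\in\mathcal A$, whose minimality is equivalent to $u$ being a weak solution of \eqref{eq}. Uniqueness of the weak solution also follows directly from the strict monotonicity of the vector field $\xi\mapsto|\xi|^{p-2}\xi$: testing the equation for the difference of two solutions against that difference forces their gradients to coincide.

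The next step is interior regularity. The $C^{1,\alpha}_{\mathrm{loc}}(\dom)$ regularity of weak solutions is the classical result for operators of $p$-Laplace type. The smooth positive weight $\gamma$ does not alter the structure of the principal part, only its coefficient dependence, so after verifying the standard structure conditions for $\mathbf a(x,\xi)=\gamma(x)|\xi|^{p-2}\xi$ (ellipticity and growth commensurate with $|\xi|^{p-2}$, together with Hölder dependence on $x$), the interior gradient Hölder estimates of Uhlenbeck, DiBenedetto, and Tolksdorf apply and yield $u\in C^{1,\alpha}_{\mathrm{loc}}(\dom)$ with locally controlled norm.

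The main obstacle, and the actual content of the cited theorem, is the estimate up to the boundary. This is delicate precisely because the equation degenerates (for $p>2$) or becomes singular (for $p<2$) exactly where $\nabla u=0$, so ellipticity is not uniform and one cannot simply import linear Schauder theory. Here I would follow Lieberman's boundary regularity scheme: flatten $\pr\dom$ locally, construct barriers adapted both to the $C^{1,\alpha}$ datum and to the degenerate structure of $\mathbf a$, and run the comparison and oscillation-decay iteration to control the oscillation of $\nabla u$ in boundary neighbourhoods. Patching these boundary estimates with the interior gradient Hölder estimate gives $u\in C^{1,\beta}(\bar\dom)$ for some $\beta\in(0,1)$. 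Finally, the quantitative bound $\|u\|_{C^{1,\beta}(\bar\dom)}\le C(\|f\|_{C^{1,\alpha}(\pr\dom)})$ is obtained by tracking how the barriers and the a priori estimates depend on the size of the datum; the dependence can be taken nondecreasing since all the comparison functions and energy bounds are monotone in $\|f\|_{C^{1,\alpha}(\pr\dom)}$.
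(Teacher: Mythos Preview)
The paper does not prove this theorem; it simply quotes it from Lieberman's work (the ``e.g.\ \cite[Theorem 1]{Lie}'' in the statement is the entire argument the authors give). So there is no proof in the paper to compare your proposal against.

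That said, your sketch is a faithful outline of the standard route to such a result and, in particular, of Lieberman's own approach: existence and uniqueness via the direct method and strict monotonicity, interior $C^{1,\alpha}$ regularity from the DiBenedetto--Tolksdorf theory, and the global $C^{1,\beta}$ estimate via Lieberman's boundary oscillation-decay argument with barriers adapted to the $C^{1,\alpha}$ Dirichlet datum. Nothing in your outline is wrong in spirit; just be aware that the genuinely hard step is the last one, and a full proof would amount to reproducing a substantial portion of \cite{Lie} rather than a short self-contained argument.
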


\noindent It is therefore possible to define the Dirichlet-to-Neumann map associated to \eqref{eq} by
\begin{equation}
\Lambda_\gamma(f)=\left.\left(\gamma|\nabla u|^{p-2}\partial_\nu u\right)\right|_{\pr\dom}, \qquad \forall\, f \in C^{1,\alpha}(\pr\dom),
\end{equation}
where $u$ is the unique solution to \eqref{eq} and $\nu$ is the exterior normal unit vector on $\pr \dom$. 

In \cite{Ca}, Calder\'on proposed the following question/inverse problem: can an elliptic coefficient $\gamma$ be recovered from the Dirichlet-to-Neumann map associated to the equation $\nabla\cdot(\gamma \nabla u)=0$? A positive answer for general smooth coefficients $\gamma$ was first provided in \cite{SyUh} in dimension 3 or higher, and by \cite{Na} in the plane.  In the intervening decades, similar questions for other equations have been investigated  in a large number of papers. It is beyond the purposes of our work to give a full account of the existing inverse problems literature. We will reference below those works that are most closely related to our own, in terms of subject matter or technique.

In this paper we are interested in the inverse problem of recovering the a priori unknown coefficient $\gamma$ in \eqref{eq}, given the knowledge of $\Lambda_\gamma$. This a natural analogue of the original problem of Calder\'on. 
We will prove two  results. The first is the following general uniqueness result in the plane.
\begin{thm}\label{thm-main-1}
Let $n=2$ and let $\gamma, \tilde\gamma\in C^\infty(\bar\dom)$ be strictly positive functions. If
 $\Lambda_{\gamma}=\Lambda_{\tilde \gamma}$, then $\gamma=\tilde\gamma$.
\end{thm}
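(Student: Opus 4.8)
The plan is to linearize the nonlinear problem at a solution, reduce the first variation to a \emph{linear} anisotropic conductivity equation, and then exploit the very rigid structure of that conductivity, which in the plane is governed by a Beltrami equation. Fix boundary data $f$ and let $u,\tilde u$ solve \eqref{eq} with weights $\gamma,\tilde\gamma$ and this common boundary value. Writing $u_t$ for the solution with data $f+tg$ and setting $v=\pr_t u_t|_{t=0}$, differentiation of \eqref{eq} at $t=0$ shows that $v$ solves $\nabla\cdot(\sigma_\gamma\nabla v)=0$ with
\begin{equation*}
\sigma_\gamma=\gamma|\nabla u|^{p-2}\Big(I+(p-2)\frac{\nabla u\otimes\nabla u}{|\nabla u|^2}\Big),
\end{equation*}
and that $g\mapsto(\sigma_\gamma\nabla v)\cdot\nu|_{\pr\dom}$ is precisely the Fr\'echet derivative $D\Lambda_\gamma(f)$. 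Since $\Lambda_\gamma=\Lambda_{\tilde\gamma}$ as maps, their derivatives at every $f$ coincide, so the two linear conductivities $\sigma_\gamma$ (built from $u$) and $\sigma_{\tilde\gamma}$ (built from $\tilde u$) share the same Dirichlet-to-Neumann map.

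A first technical point is that this requires $\nabla u\ne0$, both to make \eqref{eq} nondegenerate (so that the solution map is differentiable) and to make $\sigma_\gamma$ a genuine, uniformly elliptic smooth conductivity. Here the plane is essential: by the classical two-dimensional theory of critical points of solutions of elliptic equations, one can choose $f$ (for instance boundary data that is monotone along $\pr\dom$, or close to the trace of a linear function) so that the solution has no interior critical points, and one must verify the same for $\tilde u$. Granting this, the eigenvalues of $\sigma_\gamma$ are $(p-1)\gamma|\nabla u|^{p-2}$ along $\nabla u$ and $\gamma|\nabla u|^{p-2}$ in the orthogonal direction, so the \emph{anisotropy ratio is the constant} $p-1$, independent of the point and of the unknown data. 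Consequently the Beltrami coefficient attached to $\sigma_\gamma$ has the constant modulus $|p-2|/p$, with phase recording only the direction of $\nabla u$; moreover $\det\sigma_\gamma=(p-1)\gamma^2|\nabla u|^{2p-4}$ is a scalar quantity from which, once $u$ is known, $\gamma$ can be read off.

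I would then invoke uniqueness for the \emph{linear} conductivity equation in the plane: equal Dirichlet-to-Neumann maps determine an anisotropic conductivity only up to a boundary-fixing diffeomorphism $\Phi$, i.e.\ $\sigma_{\tilde\gamma}=\Phi_*\sigma_\gamma$. The crux is to remove this gauge using the special form above. Two ingredients feed in. First, in two dimensions $\det\sigma$ is invariant under pushforward, so $\det\sigma_{\tilde\gamma}=\det\sigma_\gamma\circ\Phi^{-1}$ links the scalar parts. Second, the boundary data of the solution and of its stream function are already encoded in the measurements: $u|_{\pr\dom}=\tilde u|_{\pr\dom}=f$, and since the divergence-free flux $\gamma|\nabla u|^{p-2}\nabla u=\nabla^\perp w$ (with $\nabla^\perp w=(-\pr_2w,\pr_1w)$) defines a conjugate $w$ satisfying $\pr_T w=-\Lambda_\gamma(f)$ along $\pr\dom$, the hypothesis $\Lambda_\gamma=\Lambda_{\tilde\gamma}$ forces the two conjugates to have the same boundary values. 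Thus the orthogonal coordinate pairs $(u,w)$ and $(\tilde u,\tilde w)$, which put $\sigma_\gamma$ and $\sigma_{\tilde\gamma}$ into isotropic form after the single constant rescaling dictated by the fixed ratio $p-1$, map $\pr\dom$ to the same curve. Matching these coordinate systems along the boundary should force $\Phi$ to be the identity, hence $u=\tilde u$; the relation $\nabla w=-\gamma|\nabla u|^{p-2}\nabla^\perp u$ together with $u=\tilde u$ and $w=\tilde w$ then yields $\gamma=\tilde\gamma$.

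I expect the main obstacle to be exactly this last step: disentangling the diffeomorphism gauge of the anisotropic linear problem from the genuine information about $u$ and $\gamma$. The constancy of the anisotropy ratio and the a priori knowledge of the boundary traces of $(u,w)$ are precisely the structural facts that should make the gauge rigid, but converting this into a clean uniqueness statement---while simultaneously justifying differentiability of the DtN map and the absence of critical points for both $u$ and $\tilde u$, and securing boundary determination of $\gamma$ to anchor the reconstruction---is the technical heart of the argument.
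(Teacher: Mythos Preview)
Your overall strategy matches the paper's: linearize \eqref{eq} at a critical-point-free solution $u_0$ (Alessandrini--Sigalotti supplies boundary data that works simultaneously for both weights), obtain the anisotropic linearized conductivity $A$ (your $\sigma_\gamma$), deduce $\Lambda_A=\Lambda_{\tilde A}$, and invoke Nachman's two-dimensional result to get $\tilde A=\Phi_*A$ for a boundary-fixing diffeomorphism $\Phi$. The divergence is only in how the gauge $\Phi$ is removed.

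The paper supplies one ingredient you do not make explicit, and it is the key to closing the argument cleanly: $u_0$ is itself a solution of the linearized equation $\nabla\cdot(A\nabla v)=0$ (take $\phi=u_0|_{\pr\dom}$, so that $\dot u=u_0$). Since solutions are carried by the pushforward, this yields $\tilde u_0=u_0\circ\Phi^{-1}$ \emph{directly}, with no stream-function detour. Substituting this back into $\tilde A=\Phi_*A$ and setting $F=\bigl[(D\Phi)^TD\Phi/|D\Phi|\bigr]\circ\Phi^{-1}$, $P=\nabla\tilde u_0\otimes\nabla\tilde u_0/|\nabla\tilde u_0|^2$, your ``constant anisotropy ratio $p-1$'' observation becomes the matrix identity $F+\alpha(p-2)FPF=I+(p-2)P$. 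A short algebraic manipulation shows $FP=PF$, hence $F=\theta P+\eta(I-P)$, and $\det F=1$ forces $F=I$. Thus $\Phi$ is conformal and fixes $\pr\dom$; a nontrivial such $\Phi$ would violate uniqueness in the \emph{isotropic} Calder\'on problem, so $\Phi=\mathrm{Id}$, whence $u_0=\tilde u_0$ and then $\gamma=\tilde\gamma$ from the determinant relation.

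Your Beltrami/stream-function route is a reasonable geometric substitute for this algebra, and the structural facts you isolate (constant dilatation $|p-2|/p$; the conjugate $w$ has boundary trace determined by $\Lambda_\gamma$) are precisely the right ones. But the step you yourself flag---``matching these coordinate systems along the boundary should force $\Phi$ to be the identity''---is not automatic: agreement of $(u,w)$ and $(\tilde u,\tilde w)$ on $\pr\dom$ alone does not pin down $\Phi$. What is needed is the relation $\tilde u_0=u_0\circ\Phi^{-1}$ (equivalently, that $(\tilde u_0,\tilde w_0)\circ\Phi$ and $(u_0,w_0)$ are two isothermal charts for the \emph{same} conductivity, agreeing on the boundary, hence equal), and the shortest way to obtain it is again the observation that $u_0$ solves its own linearization. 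With that link in hand either your isothermal-coordinate argument or the paper's matrix computation finishes the proof; the paper's version avoids having to check that $(u_0,w_0)$ is a global diffeomorphism.
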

In dimensions 3 and higher  we prove the following uniqueness result for real-analytic weights..
\begin{thm}\label{thm-main-2}
Let $n\geq 3$ and let $\zeta \in \R^n$ be a unit vector. Suppose there exists a point $z\in\pr\dom$ in a neighborhood of which $\pr\dom$ is flat. There exists $\mu>0$, depending only on $\Omega$ and $n$, such that if $\gamma, \tilde\gamma\in C^\omega(\bar\dom)$ are strictly positive functions with $\|\zeta\cdot\nabla\gamma\|_{C^{0,\alpha}(\dom)}$, $\|\zeta\cdot\nabla\tilde\gamma\|_{C^{0,\alpha}(\dom)}<\mu$, then $\Lambda_{\gamma}=\Lambda_{\tilde\gamma}$ implies $\gamma=\tilde\gamma$.
\end{thm}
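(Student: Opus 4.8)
The plan is to reduce the nonlinear inverse problem to a linearized one by differentiating the Dirichlet-to-Neumann map with respect to the boundary data, and then to exploit the higher-dimensional real-analyticity together with the flat boundary piece. First I would fix a reference solution $u_0$ of \eqref{eq} (for the weight $\gamma$) that has no critical points in $\bar\dom$; the existence of such a solution with $\nabla u_0\neq 0$ is exactly what makes the equation locally uniformly elliptic along $u_0$, so that a meaningful linearization is available. Writing $u=u_0+\epsilon v+O(\epsilon^2)$ for boundary data $f=u_0|_{\pr\dom}+\epsilon g$, I would compute the linearization of the weighted $p$-Laplacian at $u_0$. The key point is that this linearization is a second order, uniformly elliptic operator in divergence form, whose (matrix) coefficients are built out of $\gamma$ and the gradient $\nabla u_0$, specifically a conductivity-type operator of the form $\nabla\cdot\big(\gamma|\nabla u_0|^{p-2}(\mathrm{Id}+(p-2)\,\widehat{\nabla u_0}\otimes\widehat{\nabla u_0})\nabla v\big)=0$, where $\widehat{\nabla u_0}$ is the unit vector in the direction of $\nabla u_0$. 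The equality $\Lambda_\gamma=\Lambda_{\tilde\gamma}$ forces equality of the linearized Dirichlet-to-Neumann maps, and also (by matching the zeroth order term) equality of the two reference solutions on the boundary.

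The second stage is to upgrade the boundary equality of these linearized DN maps into interior equality of the anisotropic coefficient matrices. Here is where the flat boundary assumption and real-analyticity enter. On the flat portion of $\pr\dom$ near $z$, I would use a boundary determination argument: by choosing highly oscillatory boundary data supported near $z$ and examining the leading singular behavior of the linearized DN map, one recovers the full jet of the coefficient matrix at $z$, hence the jet of $\gamma$ and of $\nabla u_0$ there. Because $\gamma$ is real-analytic, recovering its derivatives of all orders at the single point $z$ determines $\gamma$ in a neighborhood, and then by analytic continuation throughout the connected set $\dom$. The smallness hypothesis $\|\zeta\cdot\nabla\gamma\|_{C^{0,\alpha}}<\mu$ is what I expect to be needed to control the linearization: it keeps the anisotropy of the coefficient matrix close to that of a constant-coefficient (or slowly varying) operator along the distinguished direction $\zeta$, so that the Complex Geometric Optics / Carleman-type estimates used to invert the linearized map remain valid, and so that the reference solution $u_0$ can be constructed globally without critical points.

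Concretely, the main analytic engine I would use is a construction of complex geometric optics (CGO) solutions to the linearized equation, adapted to the anisotropic principal part. Since the linearized operator is a small (anisotropic) perturbation of $\nabla\cdot(\gamma|\nabla u_0|^{p-2}\nabla\,\cdot\,)$, and since $p$ is fixed, I would either change variables to absorb the $(p-2)\widehat{\nabla u_0}\otimes\widehat{\nabla u_0}$ term into an isotropic conductivity (possible when the direction field $\widehat{\nabla u_0}$ is sufficiently regular, which real-analyticity guarantees) or treat it perturbatively under the smallness condition. Using these CGO solutions one builds a pairing/integral identity from $\Lambda_\gamma=\Lambda_{\tilde\gamma}$ that, in the high-frequency limit, yields equality of the full symbols of the two coefficient matrices, and therefore $\gamma=\tilde\gamma$ together with $\nabla u_0=\nabla\tilde u_0$.

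The hard part, I expect, is the interior uniqueness for the linearized anisotropic problem. In dimension $n\geq3$ the anisotropic Calderón problem is genuinely obstructed (coefficients are only determined up to boundary-fixing diffeomorphisms), so the whole strategy hinges on the fact that here the anisotropy is not arbitrary but is the very specific rank-one perturbation $\mathrm{Id}+(p-2)\widehat{\nabla u_0}\otimes\widehat{\nabla u_0}$ coming from a single scalar potential $u_0$. Exploiting this rigid structure — together with real-analyticity, the flat boundary determination at $z$, and the smallness of $\zeta\cdot\nabla\gamma$ to suppress the diffeomorphism ambiguity and guarantee solvability of the CGO construction — is the crux on which the proof rests, and is what forces the extra hypotheses that are absent in the two-dimensional Theorem~\ref{thm-main-1}.
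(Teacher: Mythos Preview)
Your high-level outline is right in places but contains two genuine gaps.

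First, the role of the smallness hypothesis is misidentified. In the paper it is used \emph{only} to produce the reference solution $u_0$ with $\nabla u_0\neq 0$ on $\bar\dom$ (Proposition~\ref{prop-u0-3+}, via a Schauder fixed point around $x\mapsto\zeta\cdot x$). It plays no part in any CGO or Carleman construction; indeed, no CGO solutions appear anywhere in the argument. Your third paragraph, which proposes CGO solutions for the anisotropic linearized equation as the ``main analytic engine,'' should be dropped: that route would run straight into the anisotropic Calder\'on obstruction you yourself note, and the smallness of $\zeta\cdot\nabla\gamma$ does not make the rank-one anisotropy $(p-2)\widehat{\nabla u_0}\otimes\widehat{\nabla u_0}$ small.

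Second, and more importantly, your boundary-determination paragraph skips the actual crux. For an anisotropic conductivity in $n\ge 3$, boundary determination (Lee--Uhlmann / Kang--Yun) gives the full jet of $A$ at $z$ only \emph{up to a boundary-fixing diffeomorphism} $\Phi$: one has $\partial_\nu^j\tilde A=\partial_\nu^j\big(|D\Phi|^{-1}(D\Phi)^TA\,D\Phi\big)$ on $\partial\dom$ for all $j$. You cannot read off the jet of $\gamma$ or of $\nabla u_0$ from this without first showing that $\Phi$ is trivial to infinite order at $z$. The paper does this by an explicit induction on the normal order $m$: using the special rank-one form of $A$, tangential test directions on the flat piece, and the equation $\nabla\cdot(\gamma|\nabla u_0|^{p-2}\nabla u_0)=0$ itself, one obtains at each step a $3\times 3$ linear system in the unknowns $\partial_1^m(\gamma-\tilde\gamma)(z)$, $\partial_1^{m+1}(u_0-\tilde u_0)(z)$, $\partial_1^{m+1}\Phi^1(z)$ whose determinant is computed and shown to be nonzero for all $p\in(1,2)\cup(2,\infty)$; the remaining components of $\partial_1^{m+1}\Phi$ are then killed separately. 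This algebraic step---proving the gauge is trivial order by order by exploiting the rigidity of $\mathrm{Id}+(p-2)\widehat{\nabla u_0}\otimes\widehat{\nabla u_0}$---is the heart of the proof, and your proposal does not supply it.
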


The study of inverse problems for non-linear equations is not new, but in recent years there has been a considerable increase in the interest for this topic. As examples, we can cite the papers   \cite{FeOk}, \cite{Is1}, \cite{IsNa}, \cite{IsSy},  \cite{KrUh}, \cite{KrUh2}, \cite{LaLiiLinSa1}, \cite{LaLiiLinSa2}, \cite{Sun2} on semilinear equations, and \cite{CarFe1}, \cite{CarFe2}, \cite{CarFeKiKrUh}, \cite{CarKa}, \cite{CarNaVa}, \cite{Car}, \cite{EgPiSc}, \cite{HeSun}, \cite{Is2}, \cite{KaNa}, \cite{MuUh}, \cite{Sh}, \cite{Sun1}, \cite{Sun3}, \cite{SunUh} on quasilinear equations. By and large, all these works rely on a so called \emph{second/higher linearization} method, which first appeared in \cite{Is1} . This method consists of of using Dirichlet data that depends on a small (or large) parameter $\epsilon$, typically of the form $\epsilon\phi$ (or $\lambda+\epsilon\phi$, with $\lambda$ a constant, if constants are a solution  to the linear part of the equation). One then uses the asymptotic expansion of the Dirichlet-to-Neumann map in terms of the parameter $\epsilon$ to obtain information about the coefficients of the equation. Sometimes this is presented as differentiating the equation with respect to the small parameter, then setting it to zero.

Our paper is not the first to take up the inverse boundary value problem for the weighted p-Laplacian. The works  \cite{BrKaSa}, \cite{Br}, \cite{BrHaKaSa},  \cite{BrIlKa}, \cite{GuKaSa}, \cite{KaWa}, \cite{SaZh} all address aspects of the same problem. We note that a uniqueness result without additional constraints, such as monotonicity, has not yet been previously derived for the weighted p-Laplacian. Also, past boundary determination results have only yielded $\gamma|_{\pr\dom}$ and $\pr_\nu\gamma|_{\pr\dom}$, but not the rest of the derivatives of $\gamma$ on the boundary (see \cite{SaZh}, \cite{Br}). For other degenerate equations, the only known results are those of \cite{CarGhNa}, \cite{CarGhUh} where general uniqueness results are derived for the coefficients of porous medium equations.

The approach to the proofs of Theorems \ref{thm-main-1} and \ref{thm-main-2} also makes use of a linearization method. In equation \eqref{eq} we use Dirichlet data of the form $f=\phi_0+\epsilon\phi$, with $\epsilon$ a small parameter. Let $u_\epsilon$ be the corresponding solution and, assuming we are justified in taking the derivative, let $\dot u=\left.\frac{\dd}{\dd\epsilon} u_\epsilon\right|_{\epsilon=0}$. Further assuming we can differentiate the equation, it is not hard to see that $\dot u$ should satisfy the  anisotropic, linear equation
\begin{equation}\label{eq-A}
\left\{\begin{array}{l}\nabla\cdot(A\nabla \dot u)=0,\\[7pt] \dot u|_{\pr\dom}=\phi,\end{array}\right.
\end{equation}
where $A$ is the matrix with the $u_0$-dependent coefficients
\begin{equation}
A_{jk}=\gamma|\nabla u_0|^{p-2}\left(\delta_{jk}+(p-2)\frac{\pr_ju_0\pr_ku_0}{|\nabla u_0|^2}\right).
\end{equation}
The Dirichlet-to-Neumann map $\Lambda_\gamma$  determines the Dirichlet-to-Neumann map $\Lambda_A$ for the equation \eqref{eq-A}. 

In order to use already established results for the determination of the cofficient matrix $A$, we need it to be elliptic. Indeed, even the differentiability of $u_\epsilon$ w.r.t. $\epsilon$ is in question unless that is the case. We then see that the unknown $u_0$ must be guaranteed to have no critical points. In dimension 2, by results of Alessandrini and Sigalotti in \cite{AlSi}, we can guarantee the absence of critical points by choosing Dirichlet data that has single local minimum and maximum points on $\pr\dom$. In dimension 3 and higher something like this is unlikely to hold, as even for linear elliptic equations it is known that for each Dirichlet data there is an open set of smooth coefficients that produce solutions with critical points  (see \cite{AlBaDi}). We can show however that, for coefficients $\gamma$ that vary slowly in one direction, there exists explicit Dirichlet data for which no critical points appear.

We can also point out here a simple corollary of our linearization result (Proposition \ref{prop-linearization} below), for weights that are constant in one direction. 
\begin{cor}
Let $n\geq 3$ and $\zeta\in\R^n$ be a unit vector. If $\gamma, \tilde\gamma\in C^\infty(\dom)$ are such that $\zeta\cdot\gamma=\zeta\cdot\tilde\gamma=0$ and $\Lambda_{\gamma}=\Lambda_{\tilde\gamma}$, then $\gamma=\tilde\gamma$.
\end{cor}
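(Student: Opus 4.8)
The plan is to exploit the fact that, when $\gamma$ is constant in the direction $\zeta$ (i.e.\ $\zeta\cdot\nabla\gamma=0$), the linear function $u_0(x)=\zeta\cdot x$ is an explicit solution of the weighted $p$-Laplace equation with no critical points, available simultaneously for both weights. Indeed, since $\nabla u_0\equiv\zeta$ and $|\nabla u_0|\equiv 1$, we have $\nabla\cdot(\gamma|\nabla u_0|^{p-2}\nabla u_0)=\zeta\cdot\nabla\gamma=0$, and likewise for $\tilde\gamma$. Taking the base Dirichlet datum $\phi_0=(\zeta\cdot x)|_{\pr\dom}$, Theorem \ref{thm-existence} guarantees that $u_0=\zeta\cdot x$ is the unique solution for both $\gamma$ and $\tilde\gamma$; in particular the two background solutions coincide and are free of critical points throughout $\bar\dom$, so the linearization \eqref{eq-A} is genuinely elliptic.

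First I would linearize at this $u_0$ via Proposition \ref{prop-linearization}: from $\Lambda_\gamma=\Lambda_{\tilde\gamma}$ one obtains $\Lambda_A=\Lambda_{\tilde A}$ for the anisotropic equation \eqref{eq-A}, where the coefficient matrices are now completely explicit,
\begin{equation}
A=\gamma\,M,\qquad \tilde A=\tilde\gamma\,M,\qquad M=I+(p-2)\,\zeta\otimes\zeta.
\end{equation}
The matrix $M$ is constant, symmetric and positive definite (its eigenvalues are $p-1>0$ along $\zeta$ and $1$ on $\zeta^\perp$, using $p>1$), and crucially it is the \emph{same} for both weights, since it depends only on $u_0$, $p$ and $\zeta$.

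Next I would remove the anisotropy by a constant linear change of variables. Setting $y=M^{-1/2}x$ and $w(y)=\dot u(M^{1/2}y)$, a direct computation shows that $\nabla\cdot(\gamma M\nabla\dot u)=0$ transforms into the isotropic conductivity equation $\nabla\cdot(\hat\gamma\,\nabla w)=0$ with $\hat\gamma(y)=\gamma(M^{1/2}y)$, and analogously $\tilde A$ yields $\hat{\tilde\gamma}(y)=\tilde\gamma(M^{1/2}y)$. Because the Dirichlet-to-Neumann maps are transformed by the same global linear diffeomorphism of $\dom$ onto $\hat\dom:=M^{-1/2}\dom$ and by the same Jacobian factors, the identity $\Lambda_A=\Lambda_{\tilde A}$ is carried to the equality of isotropic Calder\'on DN maps $\Lambda_{\hat\gamma}=\Lambda_{\hat{\tilde\gamma}}$ on $\hat\dom$.

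Finally, since $n\geq 3$, the classical uniqueness theorem for the linear Calder\'on problem \cite{SyUh} applies on $\hat\dom$ and gives $\hat\gamma=\hat{\tilde\gamma}$, i.e.\ $\gamma(M^{1/2}y)=\tilde\gamma(M^{1/2}y)$ for all $y\in\hat\dom$; as $x\mapsto M^{-1/2}x$ is a bijection, this is exactly $\gamma=\tilde\gamma$ on $\dom$. The only place where the hypothesis $\zeta\cdot\nabla\gamma=0$ is used — and hence the sole real content of the argument — is the first step: it makes an explicit critical-point-free background solution available for both weights at once, thereby bypassing both the delicate construction of critical-point-free solutions and the recovery of the unknown $u_0$ required in Theorems \ref{thm-main-1} and \ref{thm-main-2}. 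The remaining steps are a routine reduction to, and an appeal to, the standard isotropic Calder\'on problem, so I expect no genuine obstacle there.
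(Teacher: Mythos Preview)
Your argument is correct and follows essentially the same route as the paper: observe that $u_0(x)=\zeta\cdot x$ solves \eqref{eq} for both weights (since $\zeta\cdot\nabla\gamma=\zeta\cdot\nabla\tilde\gamma=0$), apply Proposition~\ref{prop-linearization} to obtain $\Lambda_A=\Lambda_{\tilde A}$ with $A=\gamma M$, $\tilde A=\tilde\gamma M$, $M=I+(p-2)\zeta\otimes\zeta$, and then undo the constant anisotropy by the linear change $y=M^{-1/2}x$ (equivalently, the paper's ``rescaling in the $\zeta$ direction'', since $M$ acts as $(p-1)$ on $\mathbb{R}\zeta$ and as the identity on $\zeta^\perp$) to reduce to the classical isotropic Calder\'on problem and invoke \cite{SyUh}. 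The only cosmetic slip is that the transformed scalar conductivity picks up the harmless constant factor $|\det M|^{1/2}=\sqrt{p-1}$, which cancels when comparing $\hat\gamma$ and $\hat{\tilde\gamma}$.
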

\begin{proof}
In this case $u_0=\zeta\cdot x$ is a solution of \eqref{eq}, with either weight. Then
\begin{equation}
A_{jk}=\gamma\left(\delta_{jk}+(p-2)\zeta_j\zeta_k\right),\quad \tilde A_{jk}=\tilde \gamma\left(\delta_{jk}+(p-2)\zeta_j\zeta_k\right).
\end{equation}
After a rescaling in the $\zeta$ direction, the linearized problem reduces to the classical Calder\'on problem with isotropic conductivities.
\end{proof}

The linearization procedure is detailed in in section \ref{sec_lin}. In section \ref{sec_plane} we give a proof of Theorem \ref{thm-main-1}. By the well known result \cite{Na} of Nachman, we have uniqueness for the coefficient matrix $A$, up to diffeomorphism invariance. Making use of the particular structure of $A$, we then succeed in showing that the diffeomorphism relating $A$ and $\tilde A$ must be trivial and that $\gamma=\tilde\gamma$. In section \ref{sec_higher} we give the proof of Theorem \ref{thm-main-2}. Our approach is to use boundary determination results for equation \eqref{eq-A} to obtain the values of all tangential directions of $A$ on the boundary, together with all their normal direction derivatives. From this information we are then able to inductively show uniqueness for the values of all the normal direction derivatives $\pr_\nu^k u_0|_{\pr\dom}$, $\pr_\nu^k\gamma|_{\pr\dom}$, $k=0,1,2,\ldots$ . Since here $\gamma$ is assumed to be a real-analytic function, this is enough to recover it on $\dom$.

\section{Linearizing the $p$-Laplace equation}
\label{sec_lin}

For each $\xi\in\R^n\setminus\{0\}$ let
\begin{equation}
J_j(\xi)=|\xi|^{p-2}\xi_j,\quad j=1,\ldots,n.
\end{equation}
Then
\begin{equation}
\frac{\partial}{\partial\xi_k}J_j(\xi)=|\xi|^{p-2}\left(\delta_{jk}+(p-2)\frac{\xi_j\xi_k}{|\xi|^2}\right).
\end{equation}
In what follows, we will repeatedly use the Taylor's formula
\begin{equation}\label{Taylor}
J_j(\zeta)=J_j(\xi)+\sum_{k=1}^n(\zeta_k-\xi_k)\int_0^1\partial_{\xi_k}J(\xi+t(\zeta-\xi))\dd t.
\end{equation}
We plan to linearize equation \eqref{eq} near some solution $u_0$, whose boundary data $u_0|_{\pr\dom}=\phi_0$ is known. As will become apparent below, we can only perform the linearization if $u_0$ does not have any critical points in $\dom$. In dimension two plenty of such solutions exist, thanks to the following proposition due to Alessandrini and Sigalotti.

\begin{prop}[see \mbox{\cite[Theorem 5.1]{AlSi}}]\label{prop-u0-2}
If $n=2$ there exists boundary data $\phi_0\in C^\infty(\partial \dom)$ independent of $\gamma$ such that the corresponding solution $u_0$ of \eqref{eq} is in $C^\infty(\bar\dom)$ and  $|\nabla u_0(x)|>0$ for any $x\in\dom$.
\end{prop}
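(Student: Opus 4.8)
The plan is to realize this statement as an instance of the critical-point theory for planar quasilinear elliptic equations developed by Alessandrini and Sigalotti, and then to verify that the required boundary data can be chosen independently of $\gamma$. The first step is to fix, once and for all, a smooth function $\phi_0$ on $\pr\dom\cong S^1$ having exactly one strict local maximum and one strict local minimum; concretely, $\phi_0$ increases monotonically along one boundary arc and decreases along the complementary arc. This prescription is purely topological and makes no reference to the weight, which is precisely what will yield the claimed $\gamma$-independence.

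The analytic heart of the argument is the complex structure available when $n=2$. Writing $f=\pr_x u_0-i\pr_y u_0$ for the complex gradient, I would show that $f$ satisfies a Beltrami-type system, so that $f$ is a $K$-quasiregular map on $\dom$ with $K$ depending only on $p$ through the quantity $\max(p-1,(p-1)^{-1})$. By Stoilow factorization, $f$ decomposes as a holomorphic function composed with a quasiconformal homeomorphism; consequently its zeros — which are exactly the interior critical points of $u_0$ — are isolated and each carries a finite positive multiplicity. A regularity remark is needed here: Theorem \ref{thm-existence} provides only $u_0\in C^{1,\beta}(\bar\dom)$ a priori, but wherever $\nabla u_0\neq0$ the equation \eqref{eq} is uniformly elliptic, so Schauder bootstrapping upgrades $u_0$ to $C^\infty$ there and the quasiregular structure of $f$ is genuine away from its zeros.

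With this structure in place I would invoke the argument principle for quasiregular maps. The total multiplicity of the interior zeros of $f$ equals the winding number of $\nabla u_0$ along $\pr\dom$, and this winding number is controlled by the number of local extrema of the trace $\phi_0$: for boundary data with a single maximum and a single minimum it vanishes. Hence $f$ has no interior zeros and $|\nabla u_0|>0$ throughout $\dom$. Since the gradient is then nonvanishing up to the boundary as well — here the Hopf lemma, applied at the boundary extrema, guarantees $\pr_\nu u_0\neq0$ and pins down the tangential-normal splitting of $\nabla u_0$ on $\pr\dom$ — the equation is uniformly elliptic on all of $\bar\dom$, and classical Schauder theory finally gives $u_0\in C^\infty(\bar\dom)$.

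The main obstacle is the boundary bookkeeping in the index count: one must verify that the nonvanishing of $\nabla u_0$ persists on $\pr\dom$ and that the winding-number computation correctly links the interior critical-point multiplicities to the extremal structure of $\phi_0$. This is exactly the content of the cited Theorem 5.1 of \cite{AlSi}; the only supplementary observation required for our purposes is that the admissible $\phi_0$ may be selected independently of the weight $\gamma$, which is immediate from the purely topological nature of the single-maximum/single-minimum condition.
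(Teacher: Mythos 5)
Your proposal is correct and takes essentially the same route as the paper, which offers no independent argument for this proposition at all: it is stated as a direct quotation of Theorem 5.1 of \cite{AlSi}, exactly the result you invoke, with the only supplementary observation being (as you note) that unimodal data $\phi_0$ — one local maximum and one local minimum on $\pr\dom$ — can be fixed independently of $\gamma$. Your expository sketch of the internals of that cited theorem is faithful, up to one imprecision that the citation itself absorbs: Stoilow factorization and the isolation of zeros require the complex gradient to be quasiregular \emph{across} its zero set, precisely where \eqref{eq} degenerates, so this cannot be obtained by Schauder bootstrapping away from the zeros but needs the Bojarski--Iwaniec-type estimates that underlie the Alessandrini--Sigalotti theory.
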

In higher dimensions, even for a linear elliptic equation with unknown coefficients it is impossible to guarantee the absence of critical points (see \cite{AlBaDi}). We can still show the existence of such a solution provided the weight $\gamma$ is sufficiently close to a constant.
\begin{prop}\label{prop-u0-3+}
Let $\zeta \in \R^n$ be a unit vector. There exists $\mu>0$ so that if $\|\zeta\cdot\nabla\gamma\|_{C^{0,\alpha}(\dom)}<\mu$, then there exists $u_0\in C^\infty(\bar\dom)$ which solves \eqref{eq} with  boundary data $\phi_0=\zeta\cdot x$, and  is such that $|\nabla u_0(x)|>0$ for any $x\in\dom$. 
\end{prop}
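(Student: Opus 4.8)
The plan is to treat $u_0$ as a small perturbation of an affine solution. After rotating coordinates so that $\zeta=e_n$, I first observe that the linear function $x_n$ solves \eqref{eq} exactly whenever $\pr_n\gamma\equiv0$: since $\nabla x_n=e_n$ and $|\nabla x_n|=1$, one has $\nabla\cdot(\gamma|\nabla x_n|^{p-2}\nabla x_n)=\nabla\cdot(\gamma e_n)=\pr_n\gamma$, and in that degenerate case $|\nabla x_n|\equiv1>0$. When $\pr_n\gamma=\zeta\cdot\nabla\gamma$ is merely small in $C^{0,\alpha}$, I expect the solution $u_0$ — which exists in $C^{1,\beta}(\bar\dom)$ by Theorem \ref{thm-existence}, since $\phi_0=x_n\in C^\infty(\pr\dom)$ — to be $C^1$-close to $x_n$, hence to have gradient near $e_n$ and, in particular, nowhere vanishing.

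To make this precise I set $w=u_0-x_n$, so that $w|_{\pr\dom}=0$ and $\nabla u_0=e_n+\nabla w$. Applying Taylor's formula \eqref{Taylor} with arguments $\nabla u_0$ and $e_n$ gives $J_j(\nabla u_0)-J_j(e_n)=\sum_k B_{jk}[\nabla w]\,\pr_k w$, where $B_{jk}[\nabla w]=\int_0^1\pr_{\xi_k}J_j(e_n+t\nabla w)\dd t$ and $B[0]=\mathrm{diag}(1,\dots,1,p-1)$; this is meaningful as long as $\|\nabla w\|_{L^\infty}$ is small enough that the segment from $e_n$ to $\nabla u_0$ avoids the origin, where $J$ is singular when $p<2$. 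Multiplying by $\gamma$, taking the divergence, and using that $u_0$ solves \eqref{eq} while $\nabla\cdot(\gamma J(e_n))=\pr_n\gamma$, I obtain the divergence-form problem
\[
\nabla\cdot\!\big(\gamma\,B[\nabla w]\,\nabla w\big)=-\pr_n\gamma\ \text{ in }\dom,\qquad w|_{\pr\dom}=0.
\]
Since $p>1$, the constant matrix $B[0]$ is positive definite, and $B[\nabla w]$ remains uniformly elliptic as long as $\|\nabla w\|_{L^\infty}$ stays below a threshold depending only on $n$ and $p$.

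The crux is a quantitative perturbation estimate forcing $\|w\|_{C^{1,\beta}(\bar\dom)}\to0$ as $\mu\to0$. A first, ellipticity-only bound comes from testing the equation with $w$: this yields the identity $\int_\dom\gamma\,B[\nabla w]\nabla w\cdot\nabla w=\int_\dom w\,\pr_n\gamma$, whence, using uniform ellipticity of $B$, a lower bound $\gamma\geq\gamma_{\min}>0$, and Poincaré's inequality, $\|\nabla w\|_{L^2}\leq C\,\gamma_{\min}^{-1}\|\pr_n\gamma\|_{L^2}$ — small with $\mu$, provided $\|\nabla w\|_{L^\infty}$ stays under the ellipticity threshold. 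I would upgrade this to $C^{1,\beta}$ by reading the displayed equation as a linear, uniformly elliptic, divergence-form equation with coefficient matrix $\gamma B[\nabla w]$ and invoking Schauder (gradient-Hölder) estimates, obtaining $\|w\|_{C^{1,\beta}(\bar\dom)}\leq C\|\pr_n\gamma\|_{C^{0,\alpha}}$. To break the evident circularity — both the ellipticity threshold and the regularity of the coefficients presuppose control of $\nabla w$ — I would close the loop by a continuity argument along the family of right-hand sides $-s\,\pr_n\gamma$, $s\in[0,1]$, anchored at the unperturbed solution $w\equiv0$; openness and closedness of the set of $s$ admitting a small solution then produce, for $\mu$ small, the desired $w$ with $\|\nabla w\|_{L^\infty}<1$.

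Once $\|\nabla w\|_{L^\infty}<1$ is established, $|\nabla u_0|=|e_n+\nabla w|\geq1-\|\nabla w\|_{L^\infty}>0$ on all of $\bar\dom$, which is the asserted nonvanishing of the gradient. The equation \eqref{eq} is then uniformly elliptic along $u_0$, with leading coefficient $A_{jk}$ as regular as $u_0$ permits; since $\gamma\in C^\infty$, $\pr\dom$ is smooth and $\phi_0$ is linear, the standard Schauder bootstrap — each derivative gained on $u_0$ improving the regularity of the coefficients, and conversely — yields $u_0\in C^\infty(\bar\dom)$. I expect the main obstacle to be precisely the estimate of the previous paragraph: closing the nonlinear loop so that $C^1$-smallness of $w$ is genuinely forced by smallness of $\mu$, and tracking which quantities control the Schauder constants. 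In particular, since $\gamma$ itself enters the coefficient matrix while its tangential variation is not controlled by $\mu$, some care is needed to identify the precise dependence of the admissible threshold $\mu$.
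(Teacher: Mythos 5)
Your decomposition and estimates are the paper's own: the same ansatz $u_0=\zeta\cdot x+w$ with $w|_{\pr\dom}=0$, the same Taylor-formula \eqref{Taylor} reduction to the divergence-form problem $\nabla\cdot(\gamma B[\nabla w]\nabla w)=-\zeta\cdot\nabla\gamma$, the same Schauder bound $\|w\|_{C^{2,\alpha}}\leq C\|\zeta\cdot\nabla\gamma\|_{C^{0,\alpha}}$, and the same ellipticity-plus-bootstrap conclusion. The only real divergence is the device used to close the nonlinear loop. The paper never tries to estimate $u_0-\zeta\cdot x$ directly (thereby avoiding the circularity you correctly identify): it defines, for $V$ in the ball of radius $1/2$ of $C^{2,\alpha}(\bar\dom)$, the map $T(V)=U$ solving the \emph{linear} problem $\nabla\cdot(B(\nabla V)\nabla U)=-\pr_1\gamma$, $U|_{\pr\dom}=0$ --- on that ball the coefficients are automatically uniformly elliptic and $C^{1,\alpha}$, so $T$ is well defined with $\|U\|_{C^{2,\alpha}}\leq C\|\pr_1\gamma\|_{C^{0,\alpha}}$ by \cite[Theorems 6.6, 6.14]{GiTru} --- and then invokes Schauder's fixed point theorem \cite[Theorem 11.1]{GiTru} to get a fixed point in the ball when $\mu$ is small. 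Your method of continuity along the family of right-hand sides $-s\,\zeta\cdot\nabla\gamma$ achieves the same end and is equally standard, at the cost of a bit more machinery: the openness step needs an implicit-function-theorem argument in addition to the a priori estimate, whereas the fixed-point route needs only the estimate plus compactness. One step you must make explicit in either route: the construction produces \emph{some} small solution, and it is identified with $u_0-\zeta\cdot x$ only through the uniqueness of weak solutions of \eqref{eq} furnished by Theorem \ref{thm-existence}; the paper states this identification explicitly, while in your write-up the constructed solution and $u_0-\zeta\cdot x$ are silently conflated --- which matters precisely because, as you note, one cannot even legitimately write the equation for $u_0-\zeta\cdot x$ before knowing its gradient stays away from the singularity of $J$ when $p<2$. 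Finally, your closing caveat is well placed: the Schauder constant depends on $\|\gamma\|_{C^{1,\alpha}}$ and the ellipticity bounds, so the threshold $\mu$ produced by this argument (the paper's included) in fact depends on more than $\dom$ and $n$ unless one restricts $\gamma$ to a bounded class.
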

\begin{proof}
Without loss of generality, we assume that $\zeta=(1,0,\ldots,0)$. We make the ansatz
\begin{equation}
u_0(x)= x_1+R,\quad R|_{\pr\dom}=0.
\end{equation}
By \eqref{Taylor} we have
\begin{equation}
\sum_k B_{jk}(\nabla R)\partial_k R=\gamma J_j(\nabla u_0)-\gamma\delta_{1j},
\end{equation}
\begin{equation}
B_{jk}(\xi)=\gamma\int_0^1| e_1+t\xi|^{p-2}\left(\delta_{jk}+(p-2)\frac{(\delta_{1j}+t\xi_j)(\delta_{1k}+t\xi_k)}{|e_1+t\xi|^2}\right)\dd t.
\end{equation}
Taking the divergence of the above we get
\begin{equation}
\left\{\begin{array}{l}\nabla\cdot(B(\nabla R)\nabla  R)=-\partial_1\gamma,\\[7pt]  R|_{\pr\dom}=0.\end{array}\right.
\end{equation}
Let $V\in C^{2,\alpha}(\bar\dom)$ be such that $\|V\|_{C^{2,\alpha}(\dom)}<1/2$ and define the map $T(V)=U$, where $U$ is the solution to
\begin{equation}
\left\{\begin{array}{l}\nabla\cdot(B(\nabla V)\nabla  U)=-\partial_1\gamma,\\[7pt]  U|_{\pr\dom}=0.\end{array}\right.
\end{equation}
Since $B(\nabla V)\in C^{1,\alpha}$ are uniformly elliptic coefficients, it follows that a unique solution $U\in C^{2,\alpha}(\bar\dom)$ exists (see \cite[Theorem 6.14]{GiTru}). Furthermore, by \cite[Theorem 6.6]{GiTru} we have
\begin{equation}
\|U\|_{C^{2,\alpha}(\dom)}\leq C \|\pr_1\gamma\|_{C^{0,\alpha}(\dom)}.
\end{equation}
If the right hand side is less than $1/2$,
by Shauder's fixed point theorem (see \cite[Theorem 11.1]{GiTru}) it follows that $T$ has a fixed point on the ball of radius $1/2$ in $C^{2,\alpha}(\bar\dom)$. By uniqueness of solutions for \eqref{eq}, this must be $R$ and we conclude that 
\begin{equation}
\|\nabla R\|_{L^\infty(\dom)}\leq \frac{1}{2},
\end{equation}
so 
\begin{equation}
|\nabla u_0(x)|>\frac{1}{2},\quad\forall x\in\dom.
\end{equation}

Note that the nonvanishing of the gradient $\nabla u_0$ makes the equation satisfied by $u_0$ elliptic, so the smoothness of $u_0$ follows.
\end{proof}

In what follows we will assume that $u_0$ is as in the preceding two propositions. Let $A$ be the matrix with coefficients
\begin{equation}
A_{jk}=\gamma|\nabla u_0|^{p-2}\left(\delta_{jk}+(p-2)\frac{\pr_ju_0\pr_ku_0}{|\nabla u_0|^2}\right).
\end{equation}

\begin{prop}\label{prop-linearization}
Under the assumptions of either Proposition \ref{prop-u0-2} or Proposition \ref{prop-u0-3+}, we have that the Dirichlet-to-Neumann map $\Lambda_\gamma$ for the weighted p-Laplace equation \eqref{eq} determines the Dirichlet-to-Neumann map $\Lambda_A$ for the linear equation $\nabla\cdot(A\nabla u)=0$, on the same domain $\dom$. 
\end{prop}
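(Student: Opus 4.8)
The plan is to make rigorous the formal differentiation sketched in the introduction, by differentiating the solution $u_\epsilon$ with respect to $\epsilon$ at $\epsilon=0$ and identifying the limiting object with the solution of the linear equation \eqref{eq-A}. The key structural observation is that $\Lambda_A$ can be extracted from the first-order term in the $\epsilon$-expansion of $\Lambda_\gamma(\phi_0+\epsilon\phi)$, and since $\Lambda_\gamma$ is given data, so is this expansion coefficient. Concretely, I would first fix boundary data $\phi_0$ as guaranteed by Proposition \ref{prop-u0-2} or \ref{prop-u0-3+}, so that the background solution $u_0$ satisfies $|\nabla u_0|>0$ throughout $\bar\dom$; this ensures $A$ is uniformly elliptic with $C^{0,\alpha}$ (indeed smooth, given the regularity of $u_0$) coefficients, and hence that $\Lambda_A$ is well defined.

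The main technical task is to establish differentiability of $\epsilon\mapsto u_\epsilon$ at $\epsilon=0$ in a suitable norm. I would set $w_\epsilon=(u_\epsilon-u_0)/\epsilon$ and derive the equation it satisfies by applying the Taylor formula \eqref{Taylor} to $J_j(\nabla u_\epsilon)=J_j(\nabla u_0+\epsilon\nabla w_\epsilon)$. Writing
\begin{equation}
J_j(\nabla u_\epsilon)-J_j(\nabla u_0)=\epsilon\sum_k\left(\int_0^1\pr_{\xi_k}J_j(\nabla u_0+t\epsilon\nabla w_\epsilon)\dd t\right)\pr_k w_\epsilon,
\end{equation}
and dividing the $p$-Laplace equation for $u_\epsilon$ by $\epsilon$, one sees that $w_\epsilon$ solves a linear divergence-form equation $\nabla\cdot(A^\epsilon\nabla w_\epsilon)=0$ with $w_\epsilon|_{\pr\dom}=\phi$, where the coefficient matrix $A^\epsilon$ converges to $A$ as $\epsilon\to0$. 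The key point is that because $|\nabla u_0|>0$ and $u_\epsilon\to u_0$ in $C^{1,\beta}$ by Theorem \ref{thm-existence}, for $\epsilon$ small the gradients $\nabla u_0+t\epsilon\nabla w_\epsilon$ stay bounded away from zero uniformly in $t\in[0,1]$, so $A^\epsilon$ is uniformly elliptic and $A^\epsilon\to A$ in $C^{0,\alpha}$. Standard Schauder or $W^{1,2}$ estimates then give uniform bounds on $w_\epsilon$ and, via the continuous dependence of solutions on coefficients, convergence $w_\epsilon\to\dot u$ where $\dot u$ solves exactly \eqref{eq-A}.

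I expect the main obstacle to be controlling the dependence of $A^\epsilon$ on $\epsilon$ uniformly, specifically establishing that $w_\epsilon$ is bounded independently of $\epsilon$ so that the convergence $A^\epsilon\to A$ can be upgraded to convergence of solutions. This is a bootstrap-type difficulty: the ellipticity of $A^\epsilon$ requires control of $\nabla w_\epsilon$, while the bound on $w_\epsilon$ requires the ellipticity. I would resolve it by first obtaining energy estimates at fixed small $\epsilon$ using the near-constant ellipticity inherited from $A$, then propagating these to a uniform $C^{1,\beta}$ bound on $w_\epsilon$ via interior and boundary Schauder estimates.

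Once differentiability is established, the identification of the data follows cleanly. Since $\Lambda_\gamma(\phi_0+\epsilon\phi)=(\gamma|\nabla u_\epsilon|^{p-2}\pr_\nu u_\epsilon)|_{\pr\dom}$, I would differentiate this expression at $\epsilon=0$ using \eqref{Taylor} once more, now applied on the boundary, to obtain
\begin{equation}
\left.\frac{\dd}{\dd\epsilon}\Lambda_\gamma(\phi_0+\epsilon\phi)\right|_{\epsilon=0}=\left.\left(A\nabla\dot u\cdot\nu\right)\right|_{\pr\dom}=\Lambda_A(\phi).
\end{equation}
Because the left-hand side is a directional derivative of the known map $\Lambda_\gamma$ along the known direction $\phi$, it is itself determined by $\Lambda_\gamma$; and since $\phi\in C^{1,\alpha}(\pr\dom)$ was arbitrary, the entire map $\Lambda_A$ is determined. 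This completes the chain from $\Lambda_\gamma$ to $\Lambda_A$ and establishes the proposition.
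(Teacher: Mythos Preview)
Your proposal is correct and follows essentially the same route as the paper: linearize via Taylor's formula for $J$, show $w_\epsilon=(u_\epsilon-u_0)/\epsilon$ solves a divergence-form equation with coefficients $A^\epsilon\to A$, pass to the limit, and read off $\Lambda_A(\phi)$ as the $\epsilon$-derivative of $\Lambda_\gamma(\phi_0+\epsilon\phi)$. The bootstrap you worry about is not actually present: since $A^\epsilon$ depends on $\nabla u_0+t(\nabla u_\epsilon-\nabla u_0)$ (i.e.\ on $\epsilon\nabla w_\epsilon$, not $\nabla w_\epsilon$), uniform ellipticity follows once you know $u_\epsilon\to u_0$ in $C^1$, and this comes directly from the uniform $C^{1,\beta}$ bound of Theorem~\ref{thm-existence} via Arzel\`a--Ascoli and uniqueness for \eqref{eq}, with no reference to the linear equation---which is exactly how the paper orders the argument.
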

\begin{proof}
 For $\phi\in C^\infty(\pr\dom)$, and $\epsilon\in\R$ small, let $u_\epsilon$ be the solution of
\begin{equation}\label{eq-epsilon}
\left\{\begin{array}{l}\nabla\cdot(\gamma|\nabla u_\epsilon|^{p-2}\nabla u_\epsilon)=0,\\[7pt] u_\epsilon|_{\pr\dom}=u_0|_{\pr\dom}+\epsilon\phi.\end{array}\right.
\end{equation}
We make the  Ansatz
\begin{equation}
u_\epsilon=u_0+R_\epsilon.
\end{equation}
By Theorem \ref{thm-existence}
 and the theorem of Arzel\`a-Ascoli,  it follows that (on a subsequence) we have that $R_\epsilon\to R_0$ in $C^1(\dom)$. Since then $u_0+R_0$ would be a weak solution of the same boundary value problem $u_0$ satisfies, it follows that $R_\epsilon\to0$ in $C^1(\dom)$. Since the limit is the same for every subsequence, it follows that in fact we do not need to pass to a subsequence.

By Taylor's formula we have that
\begin{equation}\label{Taylor-1}
\sum_k\partial_kR_\epsilon\int_0^1\partial_{\xi_k}J_j(\nabla u_0+t\nabla R_\epsilon)\dd t=J_j(\nabla u_\epsilon)-J_j(\nabla u_0).
\end{equation}
Let 
\begin{equation}
A^\epsilon_{jk}=\gamma\int_0^1\partial_{\xi_k}J_j(\nabla u_0+t\nabla R_\epsilon)\dd t.
\end{equation}
Since $R_\epsilon\to0$ in $C^1$, it follows that $|\nabla u_0+t\nabla R_\epsilon|$ is uniformly bounded and uniformly bounded away from zero. This implies that $A^\epsilon_{jk}$ is a set of elliptic coefficients, with ellipticity bounds independent of $\epsilon$. Taking gradients in \eqref{Taylor-1} we get that $R_\epsilon$ satisfies
\begin{equation}\label{eq-r-e}
\left\{\begin{array}{l}\nabla\cdot(A^\epsilon\nabla  R_\epsilon)=0,\\[7pt]  R_\epsilon|_{\pr\dom}=\epsilon\phi,\end{array}\right.
\end{equation}
It follows that 
\begin{equation}
\|R_\epsilon\|_{C^{1,\beta}(\dom)}\leq C\epsilon.
\end{equation}
We can again invoke the theorem of  Arzela-Ascoli to conclude that there must exist $\dot u\in C^1(\dom)$ such that $\epsilon^{-1}R_\epsilon\to\dot u$ in $C^1(\dom)$. Taking the limit in \eqref{eq-r-e} we see that $\dot u$ must be a weak solution of
\begin{equation}\label{eq-dot-u}
\left\{\begin{array}{l}\nabla\cdot(A\nabla \dot u)=0,\\[7pt] \dot u|_{\pr\dom}=\phi,\end{array}\right.
\end{equation}

Returning to \eqref{Taylor-1}, dividing by $\epsilon$ and taking the limit $\epsilon\to0$, we have that
\begin{multline}
\nu\cdot A\nabla\dot u=\lim_{\epsilon\to0}\frac{\nu\cdot J(\nabla u_\epsilon)-\nu\cdot J(\nabla u_0)}{\epsilon}\\[7pt]
=\lim_{\epsilon\to0}\frac{\Lambda_{\gamma}(u_0|_{\pr\dom}+\epsilon\phi)-\Lambda_{\gamma}(u_0|_{\pr\dom})}{\epsilon}.
\end{multline}
We see then that the Neumann data for the equation \eqref{eq-dot-u} is determined by the map $\Lambda_\gamma$.
\end{proof}

\section{Proof of Theorem \ref{thm-main-1}}\label{sec_plane}

Suppose $n=2$ and we have $\gamma$, $\tilde\gamma$ as above such that $\Lambda_\gamma=\Lambda_{\tilde\gamma}$.  We use notation such as $u_\epsilon$, $\tilde u_\epsilon$ to denote the corresponding solutions to \eqref{eq-epsilon}, etc. Observe that one consequence of the identity of the DN maps is
\begin{equation}
\int_\dom \gamma|\nabla u_0|^p\dd x=\int_\dom \tilde\gamma|\nabla \tilde u_0|^p\dd x.
\end{equation}

By Proposition \ref{prop-linearization} we have that $\Lambda_A=\Lambda_{\tilde A}$. From \cite[Theorem 2]{Na} it follows that there must exist a diffeomorphism $\Phi:\bar\dom\to\bar\dom$ such that $\Phi|_{\pr\dom}=Id$ and
\begin{equation}\label{linear-uniqueness}
\tilde A(x)=\frac{1}{|D\Phi|} (D\Phi)^TA\, D\Phi\circ\Phi^{-1}(x).
\end{equation}

Note that
\begin{multline}
\det\left(\delta_{jk}+(p-2)\frac{\pr_j u_0\pr_k u_0}{|\nabla u_0|^2}\right)\\[7pt]
=\left(1+(p-2)\frac{(\pr_1 u_0)^2}{|\nabla u_0|^2}\right)\left(1+(p-2)\frac{(\pr_2 u_0)^2}{|\nabla u_0|^2}\right)\\[7pt]
-(p-2)^2\frac{(\pr_1 u_0\pr_2 u_0)^2}{|\nabla u_0|^4}
=p-1.
\end{multline}
Taking determinants on both sides of \eqref{linear-uniqueness} we obtain
\begin{equation}
\left(\tilde\gamma|\nabla \tilde u_0|^{p-2}\right)^2=\left(\gamma |\nabla u_0|^{p-2}\right)^2|D\Phi|^{-2}|D\Phi|^2\circ\Phi^{-1},
\end{equation}
so 
\begin{equation}\label{eq-p-2}
\tilde\gamma|\nabla\tilde u_0|^{p-2}=\gamma|\nabla u_0|^{p-2}\circ\Phi^{-1}.
\end{equation}

Another consequence of $\Lambda_A=\Lambda_{\tilde A}$ is that for each $\phi$ we have  $\dot{\tilde u}=\dot{u}\circ\Phi^{-1}$. Incidentally, for $\phi=u_0|_{\pr\dom}$ the solution to the linear equation is $\dot{u}=u_0$. Therefore
\begin{equation}
\tilde u_0= u_0\circ\Phi^{-1}.
\end{equation}
It then follows that
\begin{equation}
\nabla u_0=D\Phi \left(\nabla\tilde u_0\circ\Phi\right),
\end{equation}
which we can use in \eqref{linear-uniqueness}, together with  \eqref{eq-p-2}, to get
\begin{multline}
\tilde A=\tilde\gamma|\nabla \tilde u_0|^{p-2}\frac{1}{|D\Phi|\circ\Phi^{-1}}\Bigg(\left[(D\Phi)^TD\Phi\right]\circ\Phi^{-1}
 +(p-2)\frac{|\nabla\tilde u_0|^2}{|\nabla u_0|^2\circ\Phi^{-1}}\\[7pt]\times\left[(D\Phi)^TD\Phi\right]\circ\Phi^{-1}\frac{\nabla\tilde u_0\otimes\nabla\tilde u_0}{|\nabla\tilde u_0|^2} \left[(D\Phi)^TD\Phi\right]\circ\Phi^{-1}\Bigg).
\end{multline}

Let
\begin{equation}\label{eq-F}
\!\!\!F=\left[\frac{(D\Phi)^TD\Phi}{|D\Phi|}\right]\!\circ\Phi^{-1},\;  P=\frac{\nabla\tilde u_0\otimes\nabla\tilde u_0}{|\nabla\tilde u_0|^2}, \; \alpha=|D\Phi|\!\circ\Phi^{-1}\frac{|\nabla\tilde u_0|^2}{|\nabla u_0|^2\!\circ\Phi^{-1}}.
\end{equation}
Note that $F$, $P$ are symmetric matrices, and that $P^2=P$. We have
\begin{equation}
F+\alpha (p-2)FPF=I+(p-2)P.
\end{equation}
As both $I+(p-2)P$ and $F$ are invertible, it follows that both $I+\alpha(p-2)PF$ and $I+\alpha(p-2)FP$ are also invertible.

If we multliply \eqref{eq-F} by $P$ on the left we get
\begin{equation}
PF+\alpha(p-2)PFPF=(p-1)P,
\end{equation}
so
\begin{equation}
PF=(p-1)(I+\alpha(p-2)PF)^{-1}P,
\end{equation}
since the inverse exists, and therefore 
\begin{equation}
PF=PFP.
\end{equation}
On the other hand 
\begin{equation}
FP+\alpha(p-2)FPFP=(p-1)P,
\end{equation}
so
\begin{equation}
FP=(p-1)P(I+\alpha(p-2)FP)^{-1},
\end{equation}
and therefore
\begin{equation}\label{F-P-commute}
FP=PFP=PF.
\end{equation}
Since $F$ and $P$ commute, they can be simultaneously diagonalized, and we can write
\begin{equation}
F=\theta P+\eta (I-P),\quad \theta, \eta \text{ scalars}. 
\end{equation}

It is easy to see that $\eta=1$, while $\theta$ must satisfy the equation
\begin{equation}
\theta+\alpha(p-2)\theta^2=p-1.
\end{equation}
Note that
\begin{equation}
1=\frac{|D\Phi|^2}{|D\Phi|^2}\circ\Phi^{-1}=\det F=\theta\eta=\theta.
\end{equation}
It follows that $F=I$

Suppose a non-trivial diffeomorphism such as $\Phi$ exists. Let $\sigma$ be a scalar conductivity on $\dom$ and let
\begin{equation}
\sigma_*(y)=\frac{\sigma}{|D\Phi|}(D\Phi)^TD\Phi\circ\Phi^{-1}(y)=\sigma\circ\Phi^{-1}(y) F(y)=\sigma\circ\Phi^{-1}(y).
\end{equation}
This new conductivity is also scalar and gives the same DN map as $\sigma$. This violates the known uniqueness results for the Calder\'{o}n problem in the plane. So $\Phi$ must be trivial. Therefore $u_0=\tilde u_0$ and $\gamma=\tilde\gamma$.

\section{Proof of Theorem \ref{thm-main-2}}\label{sec_higher}

As in the previous section, we will denote by $u_0$, $\tilde u_0$, $A$, $\tilde A$, etc. the functions corresponding to the coefficients $\gamma$ and $\tilde \gamma$ respectively. By Proposition \ref{prop-linearization} we have that $\Lambda_A=\Lambda_{\tilde A}$. 

It is an immediate consequence of  \cite[Proposition 1.3]{LeeUh} (or \cite[Theorem 1.3]{KaYun}) that there must exist a neighborhood of $U$ of $\pr\dom$ and a smooth diffeomorphism $\Phi:U\cap\bar\dom\to U\cap\bar\dom$, with $\Phi|_{\pr\dom}=Id$, such that
\begin{equation}\label{linear-uniqueness-boundary}
\left.\pr_\nu^j\tilde A\right|_{\pr\dom}=\left.\pr_\nu^j\frac{1}{|D\Phi|}(D\Phi)^TAD\Phi\right|_{\pr\dom}, \quad j=0,1,2,\ldots.
\end{equation}

Let $z\in\pr\dom$. Unless otherwise specified, all  the following computations will be pointwise, at this point $z$. We wish to proceed inductively in the order of differentiation in \eqref{linear-uniqueness-boundary}.

\paragraph{$0^{th}$ order:} We have that
\begin{equation}\label{linear-uniqueness-boundary-0}
\tilde A(z)=\frac{1}{|D\Phi|(z)}(D\Phi)^T(z)A(z)D\Phi(z).
\end{equation}
If $\tau$ is any unit tangent vector to $\pr\dom$ at $z$, we must have that $D\Phi(z)\tau=\tau$. Since $u_0|_{\pr\dom}=\tilde u_0|_{\pr\dom}$, we also have that $\tau\cdot\nabla u_0(z)=\tau\cdot\nabla\tilde u_0(z)$. Therefore
\begin{equation}
\tau\cdot\tilde A(z)\tau=\tilde\gamma(z)|\nabla\tilde u_0|^{p-2}(z)\left(1+(p-2)\frac{(\tau\cdot\nabla u_0)^2(z)}{|\nabla \tilde u_0|^2(z)}\right).
\end{equation}
On the other hand, by \eqref{linear-uniqueness-boundary-0} we have
\begin{equation}
\tau\cdot\tilde A(z)\tau=\frac{1}{|D\Phi|(z)}\gamma(z)|\nabla u_0|^{p-2}(z)\left(1+(p-2)\frac{(\tau\cdot\nabla u_0)^2(z)}{|\nabla  u_0|^2(z)}\right).
\end{equation}
We can vary $\tau$ in the tangent space to the boundary at $z$, which is at least two dimensional. By choosing $\tau\perp\nabla u_0(z)$ we can separately identify
\begin{equation}
\tilde\gamma(z)|\nabla\tilde u_0|^{p-2}(z)=\frac{1}{|D\Phi|(z)}\gamma(z)|\nabla u_0|^{p-2}(z).
\end{equation}
By possibly shifting the point $z$, or by slightly rotating the vector $\zeta$ in the statement of Proposition \ref{prop-u0-3+}, we can make sure that $\nabla u_0(z)$ is not normal to the boundary. Choosing $\tau\not\perp\nabla u_0(z)$ we get
\begin{equation}
\frac{1}{|\nabla\tilde u_0|^2}(z)=\frac{1}{|\nabla u_0|^2}(z).
\end{equation}
It follows that 
\begin{equation}
|\nabla u_0|(z)=|\nabla\tilde u_0|(z), 
\end{equation}
and, as we already know that $\tau\cdot\nabla u_0(z)=\tau\cdot\nabla\tilde u_0(z)$ for all $\tau$ as above, we also conclude that 
\begin{equation}
\pr_\nu  u_0(z)=\pr_\nu \tilde u_0(z).
\end{equation}
As $\Lambda_\gamma(u_0|_{\pr\dom})=\Lambda_{\tilde\gamma}(u_0|_{\pr\dom})$, we get 
\begin{equation}
\gamma(z)=\tilde\gamma(z).
\end{equation}
This further implies that
\begin{equation}
|D\Phi|(z)=1,
\end{equation}
Since now $A(z)=\tilde A(z)$ and $D\Phi$ acts as the identity in the tangent space to $\pr\dom$ at $z$, equation \eqref{linear-uniqueness-boundary-0} can  only hold if
\begin{equation}
D\Phi(z)=I.
\end{equation}

\paragraph{$1^{st}$ order:} We have that
\begin{equation}\label{linear-uniqueness-boundary-1}
(\pr_\nu\tilde A)(z)=\left(\pr_\nu\frac{1}{|D\Phi|}(D\Phi)^TAD\Phi\right)(z).
\end{equation}

From this point onward, we will use the assumption that $\pr\dom$ is flat in a neighborhood of $z$. For ease of computation, we will rotate our coordinates so that $\nu=e_1$ and locally $\pr\dom\cap U\subset\{x_1=0\}$. We also find it notationally convenient to introduce the tangential gradient $\nabla'=\nabla - \pr_1 e_1$. By possibly shifting the point $z$, or by slightly changing the vector $\zeta$ in the statement of Proposition \ref{prop-u0-3+}, we can make sure that $\pr_1u_0(z)\neq0$.

In the previous step we have shown that $D\Phi(z)=I$. It follows that
\begin{equation}
\pr_j\pr_k\Phi^l(z)=0,\quad\text{unless }j=k=1.
\end{equation}
Rewriting \eqref{linear-uniqueness-boundary-1} with this information, we obtain that at $z$
\begin{equation}\label{linear-uniqueness-boundary-1b}
\pr_1\tilde A_{jk}=\pr_1 A_{jk}+(A_{j1}\pr_1^2\Phi^k+A_{1k}\pr_1^2\Phi^j)-A_{jk}\pr_1^2\Phi^1.
\end{equation}

In preparation for using the above equations, and denoting by $a_{11}$, $a_{jj}$, $a_{j1}$  terms made up of quantities for which uniqueness has already been shown at the previous step, we compute
\begin{multline}
\pr_1 A_{11}=\pr_1\gamma |\nabla u_0|^{p-2}\left(1+(p-2)\frac{(\pr_1 u_0)^2}{|\nabla u_0|^2}\right)\\[7pt]
+\pr_1^2 u_0\gamma\pr_1 u_0|\nabla u_0|^{p-4}(p-2)\left(3+(p-4)\frac{(\pr_1 u_0)^2}{|\nabla u_0|^2}\right)+a_{11}.
\end{multline}
 For $j\neq1$
\begin{multline}
\pr_1 A_{jj}=\pr_1\gamma |\nabla u_0|^{p-2}\left(1+(p-2)\frac{(\pr_j u_0)^2}{|\nabla u_0|^2}\right)\\[7pt]
+\pr_1^2 u_0\gamma\pr_1 u_0|\nabla u_0|^{p-4}(p-2)\left(1+(p-4)\frac{(\pr_j u_0)^2}{|\nabla u_0|^2}\right)+a_{jj}.
\end{multline}
Also
\begin{multline}
\pr_1 A_{j1}=\pr_1\gamma |\nabla u_0|^{p-2}(p-2)\frac{\pr_j u_0\pr_1 u_0}{|\nabla u_0|^2}\\[7pt]
+\pr_1^2 u_0\gamma\pr_j u_0|\nabla u_0|^{p-4}(p-2)\left(1+(p-4)\frac{(\pr_1 u_0)^2}{|\nabla u_0|^2}\right)+a_{j1}.
\end{multline}

Since $\nabla\cdot(\gamma|\nabla u_0|^{p-2}\nabla u_0)=0$, at $z$ we  have that 
\begin{equation}
\pr_1\left[\gamma|\nabla u_0|^{p-2}\pr_1 u_0\right]
=-\nabla'\cdot\left[\gamma|\nabla u_0|^{p-2}\nabla' u_0\right]
=\pr_1\left[\tilde \gamma|\nabla \tilde u_0|^{p-2}\pr_1 \tilde u_0\right],
\end{equation}
by the previous step. Let $\xi_1=\pr_1(\gamma-\tilde\gamma)(z)$, $\xi_2=\pr_1^2( u_0-\tilde u_0)(z)$. It follows that
\begin{equation}
\Theta_{11}\xi_1+\Theta_{12}\xi_2=0,
\end{equation}
where
\begin{equation}
\Theta_{11}=\pr_1 u_0|\nabla u_0|^{p-2},
\end{equation}
\begin{equation}
\Theta_{12}=\gamma |\nabla u_0|^{p-2}\left(1+(p-2)\frac{(\pr_1 u_0)^2}{|\nabla u_0|^2}\right).
\end{equation}

Let $\xi_3=\pr_1^2\Phi^1(z)$. Taking $j=k=1$ in \eqref{linear-uniqueness-boundary-1b}, we obtain the equation
\begin{equation}
\Theta_{21}\xi_1+\Theta_{22}\xi_2+\Theta_{23}\xi_3=0,
\end{equation}
where
\begin{equation}
\Theta_{21}= |\nabla u_0|^{p-2}\left(1+(p-2)\frac{(\pr_1 u_0)^2}{|\nabla u_0|^2}\right)
\end{equation}
\begin{equation}
\Theta_{22}=\gamma\pr_1 u_0|\nabla u_0|^{p-4}(p-2)\left(3+(p-4)\frac{(\pr_1 u_0)^2}{|\nabla u_0|^2}\right)
\end{equation}
\begin{equation}
\Theta_{23}=\gamma |\nabla u_0|^{p-2}\left(1+(p-2)\frac{(\pr_1 u_0)^2}{|\nabla u_0|^2}\right).
\end{equation}


Taking $k=j$ in \eqref{linear-uniqueness-boundary-1b}, we obtain the equation
\begin{equation}
\Theta_{31}\xi_1+\Theta_{32}\xi_2+\Theta_{33}\xi_3=-2\pr_1^2\Phi^j\gamma |\nabla u_0|^{p-2}(p-2)\frac{\pr_j u_0\pr_1 u_0}{|\nabla u_0|^2},
\end{equation}
where
\begin{equation}
\Theta_{31}= |\nabla u_0|^{p-2}\left(1+(p-2)\frac{(\pr_j u_0)^2}{|\nabla u_0|^2}\right),
\end{equation}
\begin{equation}
\Theta_{32}=\gamma\pr_1 u_0|\nabla u_0|^{p-4}(p-2)\left(1+(p-4)\frac{(\pr_j u_0)^2}{|\nabla u_0|^2}\right),
\end{equation}
\begin{equation}
\Theta_{33}=-\gamma |\nabla u_0|^{p-2}\left(1+(p-2)\frac{(\pr_j u_0)^2}{|\nabla u_0|^2}\right),
\end{equation}

Under our assumptions, we are still free to rotate the coordinate axes, as long as the normal direction remains that of $x_1$. We can therefore arrange that $\pr_j u_0(z)=0$. In this case 
we then have the system
\begin{equation}
\left\{\begin{array}{l}\Theta_{11}\xi_1+\Theta_{12}\xi_2=0,\\[7pt] \Theta_{21}\xi_1+\Theta_{22}\xi_2+\Theta_{23}\xi_3=0,\\[7pt] \Theta_{31}\xi_1+\Theta_{32}\xi_2+\Theta_{33}\xi_3=0.\end{array}\right.
\end{equation}
Denoting $\lambda(z)=\gamma^2(z)|\nabla u_0|^{3p-8}(z)$ we compute the determinant
\begin{multline}
{\begin{vmatrix}\Theta_{11} & \Theta_{12}& 0\\[7pt]
\Theta_{21} & \Theta_{22}& \Theta_{23}\\[7pt]
\Theta_{31} & \Theta_{32}& \Theta_{33}\end{vmatrix}}\\[7pt]
=\lambda(z){\scriptsize \begin{vmatrix}
\pr_1 u_0 &|\nabla u_0|^2\left(1+(p-2)\frac{(\pr_1 u_0)^2}{|\nabla u_0|^2}\right) & 0\\[7pt]
1+(p-2)\frac{(\pr_1 u_0)^2}{|\nabla u_0|^2} & (p-2)\pr_1 u_0\left(3+(p-4)\frac{(\pr_1 u_0)^2}{|\nabla u_0|^2}\right) & 1+(p-2)\frac{(\pr_1 u_0)^2}{|\nabla u_0|^2}\\[7pt]
1 & (p-2)\pr_1 u_0 & -1
\end{vmatrix}}\\[7pt]
=\lambda(z){\scriptsize\begin{vmatrix}
\pr_1 u_0 &|\nabla u_0|^2\left(1+(p-2)\frac{(\pr_1 u_0)^2}{|\nabla u_0|^2}\right) & 0\\[7pt]
0 & (p-2)\pr_1 u_0\left(3+(p-4)\frac{(\pr_1 u_0)^2}{|\nabla u_0|^2}\right) & 1+(p-2)\frac{(\pr_1 u_0)^2}{|\nabla u_0|^2}\\[7pt]
2 & (p-2)\pr_1 u_0 & -1
\end{vmatrix}}\\[7pt]
=\lambda(z)\left[2|\nabla u_0|^2\left(1+(p-2)\frac{(\pr_1 u_0)^2}{|\nabla u_0|^2}\right)^2\right.\qquad\qquad\\[7pt]\left.
\qquad\qquad-(p-2)(\pr_1 u_0)^2\left(3+(p-4)\frac{(\pr_1 u_0)^2}{|\nabla u_0|^2}\right)\right]\\[7pt]
=\lambda(z)\left[ 
2|\nabla u_0|^2+(p-2)(\pr_1 u_0)^2+p(p-2)\frac{(\pr_1 u_0)^4}{|\nabla u_0|^2}
\right]\neq 0,
\end{multline}
where the conclusion holds because, since $p>1$, both $p-2>-1$ and $p(p-2)>-1$.
It follows that
\begin{equation}
\pr_1\gamma(z)=\pr_1\tilde\gamma(z),\quad \pr_1^2  u_0(z)=\pr_1^2\tilde u_0(z),\quad  \pr_1^2\Phi^1(z)=0.
\end{equation}

Returning to \eqref{linear-uniqueness-boundary-1b}, with $k=1$, we are left with
\begin{equation}
A_{11}\pr_1^2\Phi^j=0,
\end{equation}
which implies that
\begin{equation}
\pr_1^2\Phi^j(z)=0,
\end{equation}
for all directions $j$ that are orthogonal to the projection of $\nabla u_0$ into the tangent plane. If we choose our coordinates so that the direction of $x_l$ is the same as that of the just mentioned projection, then $A_{l1}(z)\neq 0$, so \eqref{linear-uniqueness-boundary-1b}, with $j=k=l$ gives
\begin{equation}
\pr_1^2\Phi^l(z)=0.
\end{equation}
Therefore we have that
\begin{equation}
\pr_j\pr_k\Phi(z)=0,\quad j,k=1,\ldots,n.
\end{equation}

\paragraph{$m^{th}$ order:} For multi-indices $\alpha\in\N^n$, suppose that we know that
\begin{equation}
\pr^\alpha\gamma(z)=\pr^\alpha\tilde\gamma(z),\quad \alpha_1=0,1,\ldots,m-1,
\end{equation}
\begin{equation}
\pr^\alpha u_0(z)=\pr^\alpha\tilde u_0(z),\quad \alpha_1=0,1,\ldots,m,
\end{equation}
\begin{equation}
\pr^\alpha D\Phi(z)=\pr^\alpha I, \quad \alpha_1=0, 1, \ldots, m-1.
\end{equation}

We have that
\begin{equation}\label{linear-uniqueness-boundary-m}
\left(\pr_1^m\tilde A\right)(z)=\left(\pr_1^m\frac{1}{|D\Phi|}(D\Phi)^TAD\Phi\right)(z).
\end{equation}
Using our induction assumptions, we can rewrite this as
\begin{equation}\label{linear-uniqueness-boundary-mb}
\pr_1^m\tilde A_{jk}=\pr_1^m A_{jk}+(A_{j1}\pr_1^{m+1}\Phi^k+A_{1k}\pr_1^{m+1}\Phi^j)-A_{jk}\pr_1^{m+1}\Phi^1.
\end{equation}

Denoting by $a_{jk}$ terms made up of quantities whose uniqueness follows from the induction hypotheses, we have that
\begin{multline}
\pr_1^m A_{jk}=
\pr_1^m\gamma(z)|\nabla u_0|^{p-2}\left(\delta_{jk}+(p-2)\frac{\pr_j u_0\pr_k u_0}{|\nabla u_0|^2}\right)\\[7pt]
+\pr_1^{m+1} u_0 (p-2)\gamma(z)|\nabla u_0|^{p-4}\\[7pt]
\times\left(\delta_{jk}\pr_1 u_0+(p-4)\frac{\pr_j u_0\pr_k u_0}{|\nabla u_0|^2}\pr_1 u_0+\delta_{1j}\pr_k u_0+\delta_{1k}\pr_j u_0\right)+a_{jk}.
\end{multline}

Since $\nabla\cdot(\gamma|\nabla u_0|^{p-2}\nabla u_0)=0$, at $z$ we  have that 
\begin{multline}
\pr_1^m\left[\gamma|\nabla u_0|^{p-2}\pr_1 u_0\right]\\[7pt]
=-\pr_1^{m-1}\nabla'\cdot\left[\gamma|\nabla u_0|^{p-2}\nabla' u_0\right]
=\pr_1^m\left[\tilde \gamma|\nabla \tilde u_0|^{p-2}\pr_1 \tilde u_0\right].
\end{multline}
This can be rewritten as
\begin{multline}
\pr_1^m(\gamma-\tilde\gamma)\pr_1u_0|\nabla u_0|^{p-2}\\[7pt]+
\pr_1^{m+1}(u_0-\tilde u_0)\gamma|\nabla u_0|^{p-2}\left(1+(p-2)\frac{(\pr_1 u_0)^2}{|\nabla u_0|^2}\right)=0.
\end{multline}
If we set $\xi_1=\pr_1^m(\gamma-\tilde\gamma)(z)$, $\xi_2=\pr_1^{m+1}(u_0-\tilde u_0)$, $\xi_3 = \pr_1^{m+1}\Phi^1$, and we choose a direction $j$ that is orthogonal to the projection of $\nabla u_0(z)$ into the tangent space to $\pr\dom$ at $z$, we obtain the same  system
\begin{equation}
\left\{\begin{array}{l}\Theta_{11}\xi_1+\Theta_{12}\xi_2=0,\\[7pt] \Theta_{21}\xi_1+\Theta_{22}\xi_2+\Theta_{23}\xi_3=0,\\[7pt] \Theta_{31}\xi_1+\Theta_{32}\xi_2+\Theta_{33}\xi_3=0.\end{array}\right.
\end{equation}
Since with our assumptions 
\begin{equation}
\begin{vmatrix}\Theta_{11} & \Theta_{12}& 0\\[7pt]
\Theta_{21} & \Theta_{22}& \Theta_{23}\\[7pt]
\Theta_{31} & \Theta_{32}& \Theta_{33}\end{vmatrix}\neq 0,
\end{equation}
it follows that
\begin{equation}
\pr_1^m\gamma(z)=\pr_1^m\gamma(z),\quad \pr_1^{m+1} u_0(z)=\pr_1^{m+1} u_0(z),\quad \pr_1^{m+1}\Phi^1(z)=0.
\end{equation}

Setting $k=1$ in \eqref{linear-uniqueness-boundary-mb}, we have
\begin{equation}
A_{11}\pr_1^{m+1}\Phi^j=0,
\end{equation}
which implies that
\begin{equation}
\pr_1^{m+1}\Phi^j(z)=0,
\end{equation}
for all directions $j$ that are orthogonal to the projection of $\nabla u_0$ into the tangent plane. If we choose our coordinates so that the direction of $x_l$ is the same as that of the just mentioned projection, then $A_{l1}(z)\neq 0$, so \eqref{linear-uniqueness-boundary-mb}, with $j=k=l$ gives
\begin{equation}
\pr_1^{m+1}\Phi^l(z)=0.
\end{equation}
Therefore 
\begin{equation}
\pr^\alpha D\Phi(z)=\pr^\alpha I,\quad \alpha_1=0,1,\ldots,m.
\end{equation}
This completes the induction step.

\paragraph{Acknowledgments:}
C.C. was supported by NSTC grant number 112-2115-M-A49-002.

\bibliography{pLaplace}
\bibliographystyle{plain}

\end{document}